\documentclass[a4paper, oneside, 11pt]{amsart}
\usepackage[utf8]{inputenc}
\usepackage{mathtools}
\usepackage{amsfonts}
\usepackage{amsmath}
\usepackage{amssymb}
\usepackage{amsthm}
\usepackage[a4paper]{geometry}
\usepackage{mathrsfs}
\usepackage[dvipsnames]{xcolor}
\usepackage{dsfont}
\usepackage{hyperref}
\usepackage{tikz-cd} 
\usepackage{standalone}
\usetikzlibrary{arrows.meta,shapes.misc}
\usepackage[nameinlink]{cleveref} 
\usepackage{appendix}
\usepackage{enumerate}

\theoremstyle{plain}
\newtheorem{theorem}{Theorem}
\newtheorem{proposition}[theorem]{Proposition}
\newtheorem{lemma}[theorem]{Lemma}
\newtheorem{corollary}[theorem]{Corollary}
\newtheorem{definition}[theorem]{Definition}

\theoremstyle{definition}

\numberwithin{theorem}{section}
\numberwithin{equation}{section} 

\newcommand{\abs}[1]{\left\lvert #1 \right\rvert}

\newcommand{\vertiii}[1]{{\left\vert\kern-0.25ex\left\vert\kern-0.25ex\left\vert #1 
    \right\vert\kern-0.25ex\right\vert\kern-0.25ex\right\vert}}

\newcommand{\Z}{\mathbb{Z}}

\newcommand{\N}{\mathbb{N}}

\newcommand{\calB}{\mathcal{B}}
\newcommand{\calC}{\mathcal{C}}

\newcommand{\calN}{\mathcal{N}}

\newcommand{\calP}{\mathcal{P}}

\newcommand{\bfP}{\mathbf{P}}
\newcommand{\bfQ}{\mathbf{Q}}

\newcommand{\NN}{\overline{\mathcal N_o}}

\newcommand{\bbP}{\mathbb{P}}
\newcommand{\bbQ}{\mathbb{Q}}

\newcommand{\cond}{\big\vert}

\title[Comparison inequalities for infection processes]{Comparison inequalities for discrete- and continuous-time infection processes}

\author{Benedikt Jahnel$^{1,2}$}
\address{$^{1}$Technische Universit\"at Braunschweig}
\email{benedikt.jahnel@tu-braunschweig.de}
\email{jonas.koeppl@tu-braunschweig.de}
\author{Jonas K\"oppl$^{1}$}
\address{$^{2}$Weierstrass Institute Berlin}
\email{peterisklavs.silins@wias-berlin.de}
\author{Pēteris Siliņš{$^2$}}

\date{\today}
\keywords{Comparison inequalities, contact processes, oriented percolation, interacting particle systems, stochastic ordering}
\subjclass{Primary 60K35; Secondary 60J28} 
\usepackage[style = numeric-comp, sorting=nyt, url = false, abbreviate=false, maxbibnames=9, sortcites=true, doi = true, backend = biber, giveninits = true, isbn=false]{biblatex}
\renewbibmacro{in:}{\ifentrytype{article}{}{\printtext{\bibstring{in}\intitlepunct}}}
\DeclareFieldFormat{journaltitle}{\mkbibemph{#1\isdot}}
\begin{document}

\begin{abstract}
We study comparison inequalities for contact-process-type infection dynamics on $\Z^d$ with general local transmission mechanisms. The processes considered include both discrete-time oriented-percolation models and continuous-time contact processes in which infection events may transmit to random, possibly correlated sets of neighbouring sites. Our main results apply local-to-global comparison criteria beyond stochastic domination: if one local infection rule is more likely than another to hit every non-empty test set of neighbours, then the corresponding process has larger survival probabilities at all times.
We compare classical independent-infection models with exchangeable fixed-budget, all-or-nothing, and burst-type infection mechanisms, deriving orderings for example of finite-time and infinite-time survival probabilities and stopping-time distributions. In continuous time, we further obtain consequences for critical survival thresholds of exchangeable infection laws. The results show that infection processes with identical or comparable mean-field infection intensity can nevertheless be rigorously ordered at the stochastic level once spatial geometry is taken into account. Additionally, it turns out that the classical continuous-time contact process dominates a variety of related continuous-time models, whereas its most natural discrete-time version, Bernoulli oriented percolation, is not dominant with respect to related discrete-time models. 
\end{abstract}

\maketitle

\section{Introduction}
The contact process is one of the fundamental interacting particle systems and has served for decades as a paradigmatic model for the spread of infections or more generally  information on networks. Introduced in~\cite{Harris1974}, it has become a central object in probability theory and statistical physics, both for its role as the canonical lattice model of oriented percolation, see~\cite{durrett1984oriented}, as well as its  phase-transition behaviour as a continuous-time interacting particle system, see for example~\cite{Liggett1985,Liggett1999}.

In this short note, we investigate the  question of how different infection mechanisms influence the global behaviour of an epidemic. While the classical continuous-time contact process assumes that infected individuals infect their neighbours independently at exponential times, many applications call for more general transmission mechanisms. Examples include synchronous infections, burst-like transmission events, threshold effects, or infections that simultaneously affect several neighbouring sites. Such models arise naturally in epidemiology, ecology, and network science, where infection events often occur in correlated groups rather than independently, and where directionality, heterogeneity and correlation structure are known to materially affect risk and spread beyond what mean-field descriptions predict~\cite{allard2023role}. Comparisons between qualitatively different transmission mechanisms, e.g.\ horizontal versus vertical transmission, have also been studied in spatial stochastic models~\cite{schinazi2000horizontal}. In particular, burst production has been proposed as a biologically realistic alternative to continuous viral release and shown to substantially alter the early dynamics of viral infections, see e.g.,~\cite{PearsonKrapivskyPerelson2011,williams2024reproduction}.

From a probabilistic perspective, comparing different infection mechanisms is considerably more subtle than comparing infection rates within a fixed model. Classical comparison techniques are largely based on stochastic domination. For the contact process, for example Harris' graphical representation provides a powerful coupling construction that immediately yields monotonicity with respect to the infection rate and the initial condition. However, these methods typically require that one process dominates another in a pointwise sense and therefore do not apply when the local infection mechanisms are qualitatively different. 

Interestingly, many of the infection processes considered in this work share the same mean-field description despite exhibiting markedly different microscopic transmission mechanisms. In particular, the evolution of the average infection density is often governed by the same logistic equation, implying identical epidemic thresholds and equilibrium densities at the mean-field level. 
Our results therefore demonstrate that models that are indistinguishable under mean-field approximations can nevertheless be rigorously compared on the stochastic level in the presence of geometry.

The present work develops and applies comparison inequalities for infection processes that go beyond stochastic domination. The idea that local, vertex-wise ordering conditions can imply global comparisons of connectivity, without requiring stochastic domination, has a substantial history in the percolation and spatial-epidemics literature. Building on the clutter-percolation theorem of~\cite{mcdiarmid1981general}, the work~\cite{Kuulasmaa1982} introduced the framework of locally dependent random graphs to compare the extinction behaviour of spatial general epidemics with different, possibly correlated, local transmission mechanisms, and used this comparison to bound the critical infection rate of a class of SIR-type spatial epidemic models. This program was continued for example in~\cite{grimmett1998dependent}. 

Our main reference, however, is the recent work~\cite{baumler2026localcriteriaglobalconnectivity}, which revisits this local-to-global philosophy for degree-constrained percolation models on $\Z^d$. Here, we extend this philosophy from static percolation models to dynamic infection processes, returning the comparison technique to its original epidemiological setting. We introduce a general framework encompassing contact-process-type dynamics in both continuous and discrete time and derive local comparison criteria that imply global comparison inequalities for the resulting infection processes beyond stochastic domination. A principal example is the comparison between the classical contact process, where each infection event targets a single neighbouring site, and models in which an infection event simultaneously infects a prescribed number of neighbours. Although these processes exhibit fundamentally different local dynamics and admit no natural monotone coupling, our approach nevertheless establishes rigorous comparisons between their infection probabilities.

Besides providing new comparison tools for interacting particle systems, both in discrete and continuous time, our results illustrate that local inequalities can remain effective well beyond the setting of independent or positively associated models. We hope that this framework will prove useful for studying more general epidemic processes with correlated transmission mechanisms and stimulate further interactions between percolation theory and stochastic interacting particle systems.

The manuscript is organised as follows. We present the discrete- and continuous-time settings and the associated local-to-global comparisons in the following Section~\ref{sec_set}, which also contains our main applications as mentioned above. All proofs are exhibited in Section~\ref{sec_proof}.

\section{Setting and main results}\label{sec_set}
We consider infection processes on $\Z^d$, with $d\ge 1$, equipped with a nearest-neighbour structure. More precisely, $u\in \Z ^d$ has nearest-neighbours $\mathcal N_u=\{v\in \Z^d\colon \Vert u-v\Vert_1=1\}$ and we consider local Markovian dynamics for the evolution of a population of vertices in $\Z^d$ that are either {\em infected} or {\em susceptible}, both in discrete- as well as in continuous time. We consider a variety of nearest-neighbour infection mechanisms for which we provide comparisons for survival probabilities of the infection. Let us start with the discrete-time setting.

\subsection{Local-to-global comparison in discrete time}
In discrete time we consider infection processes based on local lattice-translation invariant and time-homogeneous infection rules. For this, let $N(x,n)$ be the random set of {\em infected neighbours} of $x$, which can include $x$ itself, infected at time $n+1$ from the space-time position $(x,n)$. More precisely, we set $N(x,n)=x+N'(x,n)$, where $N'(x,n)$ is an i.i.d.~copy of $N\subseteq \NN=\mathcal N_o\cup\{o\}$ for every $(x,n)$. We then say that $y\in N(x,n)$ is {\em infected} by $x$ or alternatively that the directed (space-time) edge between $(x,n)$ and $(y,n+1)$ is {\em open}.

We want to compare probabilities of sets of prescribed space-time paths to be open, in other words, infections to be passed through prescribed paths. For this, consider the {\em set of finite nearest-neighbour paths} of length $n\ge 0$,
$$\Psi_n:=\big\{\bar x=(x_0,\dots,x_n)\colon x_0=o,\, x_{i+1}-x_i\in \NN\text{ with }0\le i< n\big\},$$
 that start in the origin and write $\Psi:=\bigcup_{n\ge 0}\Psi_n$. For $\bar x\in \Psi$, we use $\abs{\bar x}$ to denote the length of the path. Now, for $\Xi\subset \Psi$ consider the event 
$$\calB^{\Xi}=\{\exists \bar x \in \Xi\colon x_{i+1}\in N(x_i,i)\text{ for all } 0\le i<|\bar x|\}
$$
that the infection can pass through at least one path in $\Xi$, see Figure~\ref{fig:discrete} for an illustration.
Note that $\Psi$ is countable and hence there are no measurability issues.
\begin{figure}[ht!]
\centering
\begin{tikzpicture}[>=Stealth, scale=0.95,
  site/.style={circle,fill=black!25,inner sep=1.2pt},
  inf/.style={circle,fill=black,inner sep=1.7pt},
  arr/.style={->,thick,shorten >=1.5pt},
  xione/.style={line width=5pt,green!60!black,opacity=.30,line cap=round,line join=round},
  xitwo/.style={line width=5pt,red!70!black,opacity=.30,line cap=round,line join=round}]
\draw[->] (0,0) -- (0,6.8) node[above] {time};
\foreach \n in {2,4,6} {\draw (-0.08,\n)--(0.08,\n); \node[left=4pt,fill=white,inner sep=1pt] at (0.6,\n) {$\n$};}
\draw[->] (-4.8,0) -- (5,0) node[right] {$\mathbb Z^d$};
\foreach \x in {-4,...,4} \node[below] at (\x,0) {$\x$};
\draw[xione] (0,0)--(1,1)--(1,2)--(2,3)--(2,4)--(3,5)--(2,6);
\draw[xitwo] (0,0)--(-1,1)--(-2,2)--(-2,3)--(-3,4)--(-3,5)--(-4,6);
\node[green!50!black] at (1.7,6.3) {$\bar x^{(1)}$};
\node[red!70!black]   at (-4.3,6.3) {$\bar x^{(2)}$};
\foreach \x in {-4,...,4} \foreach \n in {0,...,6} \node[site] at (\x,\n) {};
\foreach \x/\n/\y in {%
  0/0/-1, 0/0/0, 0/0/1,
  -1/1/-2, 0/1/0, 0/1/1, 1/1/1, 1/1/2,
  -2/2/-2, -2/2/-3, 1/2/2, 2/2/2, 2/2/3,
  -3/3/-4, -2/3/-2, 2/3/2, 2/3/3, 3/3/4,
  -2/4/-1, 2/4/3, 3/4/3, 4/4/3, 4/4/4,
  -1/5/-2, -1/5/0, 3/5/2, 3/5/3, 4/5/4}
  \draw[arr] (\x,\n) -- (\y,\n+1);
\foreach \x/\n in {%
  0/0,
  -1/1, 0/1, 1/1,
  -2/2, 0/2, 1/2, 2/2,
  -3/3, -2/3, 2/3, 3/3,
  -4/4, -2/4, 2/4, 3/4, 4/4,
  -1/5, 3/5, 4/5,
  -2/6, 0/6, 2/6, 3/6, 4/6}
  \node[inf] at (\x,\n) {};
\end{tikzpicture}
\caption{A graphical realisation of a discrete-time infection process with multiple outgoing infection arrows, alongside a set of paths $\Xi = \{\bar x^{(1)},\bar x^{(2)}\}$. The infection only passes through $\bar x^{(1)}$.}
\label{fig:discrete}
\end{figure}

Our first statement provides a comparison for the probabilities of events $\calB^{\Xi}$ for ordered distributions of the underlying local infection distributions. For this, let $\bfP,\bfQ$ be two probability distributions of $N$ and we write $\bbP$ and $\bbQ$, respectively, for the distribution of the associated space-time process.
\begin{theorem}[Local-to-global in discrete time]\label{thm_disc}
If $\bfP$ and $\bfQ$ are such that 
\begin{align*}
    \bfP[N\cap A\neq \emptyset]\le \bfQ[N\cap A\neq \emptyset],\qquad \text{ for all }\emptyset\subsetneq A\subseteq\NN,
\end{align*}
then, 
\begin{align*}
    \mathbb{P}[\calB^{\Xi}] \leq  \bbQ[\calB^{\Xi}], \qquad \text{ for all }\Xi\subset\Psi.
\end{align*}
\end{theorem}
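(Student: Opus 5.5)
We replace the random sets $N'(x,n)$ one space-time site at a time, in the spirit of the clutter-percolation argument of~\cite{mcdiarmid1981general} and its use in~\cite{baumler2026localcriteriaglobalconnectivity}. First we reduce to finite $\Xi$. Since $\Psi$ is countable, write $\Xi=\bigcup_k \Xi_k$ with $\Xi_k$ finite and increasing. Then $\calB^{\Xi_k}\uparrow\calB^{\Xi}$, and continuity of measure passes the inequality to the limit. For finite $\Xi$, with maximal path length $L$, the event $\calB^\Xi$ depends only on the finitely many variables $N'(x,n)$ with $(x,n)$ in the finite set $S$ of space-time points visited by paths of $\Xi$ at times $n<L$. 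Enumerate $S=\{s_1,\dots,s_m\}$. For $0\le j\le m$, let $\bbP_j$ be the product measure in which the sites $s_1,\dots,s_j$ use law $\bfQ$ and the remaining sites use law $\bfP$. Then $\bbP_0=\bbP$ and $\bbP_m=\bbQ$ on $\calB^\Xi$, so it suffices to show $\bbP_{j-1}[\calB^\Xi]\le\bbP_j[\calB^\Xi]$ for every $j$.

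Fix $j$ and write $s=s_j=(x,n)$. Condition on all variables other than $N'(s)$; these are independent of $N'(s)$ and have the same law under $\bbP_{j-1}$ and $\bbP_j$. Given this configuration $\omega$, the indicator of $\calB^\Xi$ is a function $f_\omega$ of the single set $N'(s)\subseteq\NN$. The event $\calB^\Xi$ is increasing in the family of open edges, so $f_\omega$ is increasing in $N'(s)$.

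The key step is to show that $f_\omega(N')=\mathds 1\{N'\cap A_\omega\neq\emptyset\}$ or $f_\omega\equiv 1$ or $f_\omega\equiv 0$, for a suitable set $A_\omega$. This is where the path structure enters. Every path $\bar x\in\Xi$ uses each time level at most once, so it passes through $s$ at most once. If $\bar x$ does pass through $s$, it uses exactly one outgoing edge there, namely the step $x_{n+1}-x_n\in\NN$. Hence, if no path of $\Xi$ is open while avoiding $s$, then $f_\omega=1$ if and only if $N'(s)$ contains some step $a$ for which a path in $\Xi$ through $s$ with step $a$ has all its other edges open in $\omega$. We define $A_\omega$ to be the set of such $a$. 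If instead some path avoiding $s$ is already open, then $f_\omega\equiv1$.

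With this representation in hand, $\bfP[f_\omega=1]\le\bfQ[f_\omega=1]$ follows from the hypothesis when $A_\omega\neq\emptyset$, and trivially in the other cases. Integrating over $\omega$ gives $\bbP_{j-1}[\calB^\Xi]\le\bbP_j[\calB^\Xi]$.

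The main obstacle is this single-site reduction: one must check that each path meets the site $s$ in at most one outgoing edge, which holds because paths are time-indexed. This is exactly why the hypothesis only needs to control hitting probabilities $\bfP[N\cap A\neq\emptyset]$, rather than requiring full stochastic domination of $N$.
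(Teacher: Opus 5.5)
Your proposal is correct and follows essentially the same route as the paper: reduce to finite $\Xi$ by continuity of measure, then interpolate between $\bbP$ and $\bbQ$ by swapping the local law one relevant space-time site at a time. At each swap, conditionally on all other sites, the event is either already determined or equals $\{N'(s)\cap A\neq\emptyset\}$; this is the paper's pivotal/non-pivotal dichotomy. Your explicit justification of that representation, namely that each time-indexed path uses at most one outgoing edge at $s$, is a detail the paper leaves implicit.
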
 
We present the proof and all other proofs in  Section~\ref{sec_proof}. In fact, a more general version of Theorem~\ref{thm_disc} is already presented in~\cite[Theorem 2.1]{Kuulasmaa1982} based on results in~\cite{mcdiarmid1981general}. There, the local comparison is phrased as an ordering of so-called {\em zero-functions} and the setting allows for more general graphs as well as infinite prescribed paths. However, the implied comparisons of degree-constrained percolation models, see below, are new. 

Let us highlight that the previous result is applicable also in the absence of stochastic domination of the local infection mechanisms. 
If $\Xi$ is the set of paths of length $n\in \N$, it provides for example a comparison for survival up to time $n$, and, by letting $n$ tend to infinity and continuity of measures, we also have domination of the global survival probabilities. Also, for $\Xi$ the set of paths of length $m\le n$ with $x_m=x$, the result provides a comparison for the probabilities of the events $\{\tau_x\le n\}$, where $\tau_x$ denotes the first hitting time of $x\in \Z^d$. This in turn would guarantee a nesting of the limiting deterministic shapes, once shape theorems for the models under consideration are established. This last task depends on the models at hand and we leave it to future work, not without mentioning that such shape theorems have been established for non-standard oriented percolation models for example in~\cite{cox1988limit}.

Theorem~\ref{thm_disc} becomes particularly interesting when we compare processes that are calibrated for example with respect to the expected number of outgoing infections per site. The following section is dedicated to comparisons in that spirit. 

\subsection{Applications in discrete time}\label{sec_application}
Recall that we assume that the process is started from a single infected vertex at time zero at the origin $o\in \Z^d$. In discrete time the dynamics can be seen as oriented percolation models where we highlight that additional edges are present connecting $(x,n)$ to $(x,n+1)$ for all $x\in \Z^d$ and $n\in \N_0$. Those extra edges can be treated without special attention bringing the models closer to corresponding general percolation models as presented in~\cite{baumler2026localcriteriaglobalconnectivity}.

However, in view of discrete-time contact processes it seems reasonable to interpret the additional edges as the potential healing of $x$ in the time interval $[n,n+1)$, which is often assumed to happen independently. In this section, we investigate both situations and start with a comparison similar to the one presented for degree-constrained percolation in~\cite{baumler2026localcriteriaglobalconnectivity}.

In order to perform calibrations, in this section, we denote by $\bfP_p$ the local infection distribution of $N$, wherein $x\in N$ with probability $p\in[0,1]$ independently for all $x\in \NN$. We refer to $p$ as the {\em infection probability} and let $\bbP_p$ denote the distribution of the associated space-time process. Note that this model closely resembles the standard $(d+1)$-dimensional oriented percolation model, with one extra edge in the time direction.

To derive other comparable  models, we associate with $p\in [0,1]$ two other parameters
\begin{align*}
    k:=\lfloor (2d+1)p\rfloor \in \{0,\ldots, 2d+1\} \quad \text{ and } \quad \varepsilon:= (2d+1)p - k\in [0,1),
\end{align*}
and define an infection model that we call the {\em $(2d+1)p$-nearest-neighbour infection process}. We write $\bfQ_p$ for the local distribution of $N\subset\NN$ defined as
\begin{align*}
    \bfQ_p[N = A] = \begin{cases}
        (1 - \varepsilon)\binom{2d+1}{k}^{-1} &\text{ if }\abs{A} = k,\\
        \varepsilon\binom{2d+1}{k+1}^{-1} &\text{ if }\abs{A} = k+1, k<2d+1\\
        0 & \text{ otherwise}.
    \end{cases}
\end{align*}
In words, $N$ consists of $k$ or $k+1$ uniformly chosen vertices in $\NN$ with probability $1-\varepsilon$ or $\varepsilon$, respectively.

As in ~\cite{baumler2026localcriteriaglobalconnectivity} we introduce the notion of exchangeable distributions on $\calP(\NN) = \{A\colon A\subseteq \NN\}$, which are convex combinations of $(2d+1)p$-nearest-neighbour infection processes defined as follows.

\begin{definition}[Exchangeability]
We say that a probability distribution $\bfP$ on $\calP(\NN)$ is {\em exchangeable} if, for all $k\in\{0,\ldots, 2d+1\}$ with $\bfP[\abs{N} = k]>0$, the conditional measure $\bfP[\cdot\mid \abs{N}=k]$ is a uniform distribution on $\{A\subseteq \NN\colon\abs{A}=k\}$. 
\end{definition}
As in~\cite{baumler2026localcriteriaglobalconnectivity} it holds that any such convex combination can be dominated from above and below by other local laws on $\calP(\NN)$ with the same expected degree. For this purpose we define the \textit{all-or-nothing} distribution $\bfP^{\text{aon}}_p$ via
\begin{align*}
    \bfP^{\text{aon}}_p[\abs{N} = 2d+1] = p, \quad \text{ and }\quad \bfP^{\text{aon}}_p[\abs{N} = 0] = 1 -p, \quad p\in [0,1].
\end{align*}
Then the all-or-nothing infection process is the least likely to survive indefinitely, while the $(2d+1)p$-nearest-neighbour infection process, where the infection is spread to $k$ or $k+1$ neighbours, is the most likely to survive indefinitely. Here and subsequently we write $\bfP[\abs{N}]$ for the expected size of $N$ under $\bfP$.
\begin{proposition}[Domination for exchangeable measures]\label{prop:exch}
    For all exchangeable probability measures $\bfP$ with $\bfP[\abs{N}] = (2d+1)p$, one has
    \begin{align*}
        \bfP^{\textup{aon}}_p [N\cap A\neq \emptyset]\leq \bfP[N\cap A \neq \emptyset]\leq \bfQ_p[N\cap A\neq \emptyset],\quad \text{ for all }\emptyset\subsetneq A\subseteq\NN.
    \end{align*}
\end{proposition}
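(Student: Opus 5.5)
Set $m:=2d+1=\abs{\NN}$ and fix $\emptyset\subsetneq A\subseteq\NN$ with $a:=\abs{A}\in\{1,\dots,m\}$. The plan is to reduce everything to a one-dimensional statement about the law of $\abs{N}$ and a concave function. By exchangeability, conditionally on $\abs{N}=j$ the set $N$ is uniform among $j$-subsets of $\NN$, so
\begin{align*}
\bfP[N\cap A\neq\emptyset]=\sum_{j=0}^{m}\bfP[\abs{N}=j]\,f_a(j),\qquad f_a(j):=1-\binom{m-a}{j}\binom{m}{j}^{-1},
\end{align*}
with the convention $\binom{m-a}{j}=0$ for $j>m-a$. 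Both $\bfP^{\text{aon}}_p$ and $\bfQ_p$ are exchangeable, so the same formula applies to them. Writing $\mu_j=\bfP[\abs{N}=j]$, the claim becomes: among probability vectors $\mu$ on $\{0,\dots,m\}$ with mean $mp$, the linear functional $\sum_j\mu_jf_a(j)$ is minimised by $\mu=(1-p)\delta_0+p\delta_m$ and maximised by $\mu=(1-\varepsilon)\delta_k+\varepsilon\delta_{k+1}$.

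The key step is to show that $j\mapsto f_a(j)$ is concave on $\{0,\dots,m\}$. Here $g(j):=1-f_a(j)=\prod_{i=0}^{a-1}\frac{m-j-i}{m-i}$ is the probability that a uniform $j$-set avoids $A$. I will compute its increments directly:
\begin{align*}
g(j)-g(j+1)=\frac{\binom{m-a}{j}-\binom{m-a}{j+1}\cdot\frac{j+1}{m-j}}{\binom{m}{j}}=g(j)\,\frac{a}{m-j}.
\end{align*}
Alternatively, $g(j+1)=g(j)\frac{m-j-a}{m-j}$ gives the same thing. So the increments of $f_a$ are $\Delta(j):=f_a(j+1)-f_a(j)=g(j)\frac{a}{m-j}$. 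Their ratio is
\begin{align*}
\frac{\Delta(j+1)}{\Delta(j)}=\frac{m-j-a}{m-j}\cdot\frac{m-j}{m-j-1}=\frac{m-j-a}{m-j-1}\le 1,
\end{align*}
since $a\ge1$. When $g(j+1)=0$ the increment simply becomes $0\le\Delta(j)$. Hence $\Delta$ is nonincreasing and nonnegative, so $f_a$ is concave and nondecreasing. I expect this computation, with its boundary cases ($j+a>m$, $j=m-1$), to be the only real obstacle, and it is routine.

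With concavity in hand, both bounds follow from discrete Jensen-type arguments.

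\textbf{Upper bound.} Extend $f_a$ piecewise linearly to $[0,m]$; the extension $\hat f$ is still concave. Then
\begin{align*}
\sum_j\mu_jf_a(j)\le\hat f\Big(\sum_j j\mu_j\Big)=\hat f(mp)=(1-\varepsilon)f_a(k)+\varepsilon f_a(k+1),
\end{align*}
and the right-hand side is exactly $\bfQ_p[N\cap A\neq\emptyset]$. When $k=m$ we have $\varepsilon=0$ and the second term is absent.

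\textbf{Lower bound.} Concavity gives $f_a(j)\ge (1-\tfrac jm)f_a(0)+\tfrac jm f_a(m)=\tfrac jm$, using $f_a(0)=0$ and $f_a(m)=1$. Summing against $\mu$ yields
\begin{align*}
\sum_j\mu_jf_a(j)\ge\frac{mp}{m}=p=\bfP^{\text{aon}}_p[N\cap A\neq\emptyset].
\end{align*}
This completes the plan.
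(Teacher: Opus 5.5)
Your proof is correct, including the boundary cases ($g(j)=0$ for $j>m-a$, and $k=m$). It differs from the paper's written argument mainly in the lower bound. The paper picks a single $x\in A$ and uses $\bfP[N\cap A\neq\emptyset]\ge\bfP[x\in N]=\sum_{n}\frac{n}{2d+1}\bfP[\abs{N}=n]=p$, with no concavity at all. Your chord inequality $f_a(j)\ge j/m$ is exactly this bound conditioned on $\abs{N}=j$: it is just $f_a(j)\ge f_1(j)$, i.e.\ monotonicity in $A$. So at that step you use a stronger tool than needed.

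For the upper bound, the paper only points to the proof of Proposition~3.2 in the work of B\"aumler et al.; your concavity-plus-Jensen argument makes it self-contained. The concavity you prove is the same structural fact the paper establishes in the proof of Proposition~\ref{prop:cts_ord1}. There it comes from the real extension $r(k)=\prod_{n}(1-\frac{k}{m-n})$ and a second-derivative computation valid only on $[0,m-\ell]$, which forces a separate case once the avoidance probability vanishes. Your discrete increment ratio $\Delta(j+1)/\Delta(j)=\frac{m-j-a}{m-j-1}$ is more elementary and covers all of $\{0,\dots,m\}$ at once.

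Overall, your route gives a single unifying principle: a concave function of $\abs{N}$ under a mean constraint is minimised by pushing mass to the endpoints $\{0,m\}$ (all-or-nothing) and maximised by concentrating it on $\{k,k+1\}$ (the law $\bfQ_p$). The paper's lower-bound argument, by contrast, is shorter and needs only the one-site marginal.
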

In particular the classical oriented percolation model is sandwiched in the sense that for all $p\in [0,1]$ we have that
    \begin{align*}
       \bfP^{\textup{aon}}_p [N\cap A\neq \emptyset]\leq \bfP_p[N\cap A\neq \emptyset]\leq \bfQ_p[N\cap A\neq \emptyset],\quad \text{ for all }\emptyset\subsetneq A\subseteq\NN. 
    \end{align*}

As alluded to above, all of the aforementioned models can be modified to more closely resemble what might be imagined as a discrete-time contact process with independent recoveries. More precisely, we may adapt all of the previous models such that the origin is infected separately from the $2d$ nearest neighbours. Specifically, we may assume that for $A\subseteq \calN_o$, the events $\{N\setminus\{o\} = A\}$ and $\{o\in N\}$ are independent with $\{o\not\in N\}$ having probability $0\leq q\leq 1$. We call this the {\em recovery probability} and say that there was a recovery at $(x,n)\in\Z^d\times\N_0$ if $x\notin N(x,n)$. Such models we refer to as  having {\em independent recoveries}, and $q = 1$ implies that the additional edges are removed entirely.
The statement of Proposition~\ref{prop:exch} transfers to models with independent recoveries if we keep the recovery probability fixed in all models and compare only the infection laws for the remaining neighbours in $\mathcal N_o$. In this case the measures are calibrated to have mean $2dp + (1 - q)$. 

Finally, in view of applications where an infected cell transmits infection only upon death, as studied in a biological setting, see for example~\cite{PearsonKrapivskyPerelson2011}, we consider {\em burst infection processes} where at each space-time position recovery happens with positive probability $q$ and, if the vertex does not recover, it also does not spread the infection to any neighbour.

\begin{definition}[Burst infection]
    We say that a probability measure $\hat{\bfP}$ on $\calP(\NN)$ is a {\em burst probability measure} if $\hat{\bfP}[o\notin N]>0$ and $\hat{\bfP}[\{\abs{N}>1\}\cap\{o\in N\}] = 0$.
\end{definition}

There are some cases where the burst infection process can outlive a non-burst process, even if their expected infection intensity is the same. 
\begin{lemma}[Domination by burst-infection models]\label{lemma:burst}
    Let $\bfP$ be a measure on $\calP(\NN)$ that has independent recoveries with probability $q>0$ and for which $\bfP[\abs{N\setminus\{o\}}>0 ]\leq q$. Then, there exists a burst measure $\hat{\bfP}$ for which
    $\hat{\bfP}[o\notin N] =q$, $\bfP[\abs{N}] = \hat{\bfP}[\abs{N}]$, $\bfP[N\setminus \{o\} = A] = \hat{\bfP}[N\setminus \{o\} = A]$ for all $ A\subseteq \calN_o$, and
    \begin{align*}
        \bfP[N\cap A\neq \emptyset]\leq \hat{\bfP}[N\cap A\neq \emptyset],\qquad\text{for all  }\emptyset\subsetneq A\subseteq \NN.
    \end{align*}
\end{lemma}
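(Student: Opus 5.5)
We construct $\hat{\bfP}$ explicitly by keeping the law of the neighbour part $N\setminus\{o\}$ and rearranging how it is coupled with the self-infection event $\{o\in N\}$.

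Under $\bfP$, write $N_0:=N\setminus\{o\}$. By assumption $N_0$ is independent of $\{o\in N\}$ and $\bfP[o\notin N]=q$. Let $r:=\bfP[N_0\neq\emptyset]\le q$. Define $\hat{\bfP}$ by
\begin{align*}
\hat{\bfP}[N=A]=\bfP[N_0=A]\ \text{ for } \emptyset\neq A\subseteq\calN_o,\quad \hat{\bfP}[N=\emptyset]=q-r,\quad \hat{\bfP}[N=\{o\}]=1-q.
\end{align*}
In words, the vertex recovers with probability $q$. On the recovery event it bursts, with the nonempty neighbour sets carrying exactly their $\bfP$-mass; this is possible precisely because $r\le q$. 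Otherwise it stays infected and infects nobody else. These weights are nonnegative and sum to one.

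The routine properties then follow directly:
\begin{itemize}
\item $\hat{\bfP}[o\notin N]=q>0$.
\item $\hat{\bfP}[\{|N|>1\}\cap\{o\in N\}]=0$, so $\hat{\bfP}$ is a burst measure.
\item $\hat{\bfP}[N\setminus\{o\}=A]=\bfP[N_0=A]$ for nonempty $A$. The case $A=\emptyset$ then follows by complement.
\item Since $|N|=|N_0|+\mathds 1\{o\in N\}$ under both measures, and the marginals of $N_0$ and of $\{o\in N\}$ agree, the expectations $\bfP[|N|]$ and $\hat{\bfP}[|N|]$ coincide.
\end{itemize}

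For the hitting inequality, fix $\emptyset\subsetneq A\subseteq\NN$ and write $A_0=A\setminus\{o\}$. If $o\notin A$, both sides equal $\bfP[N_0\cap A_0\neq\emptyset]$, because the $N_0$-marginals agree. If $o\in A$, then
\begin{align*}
\bfP[N\cap A\neq\emptyset]=1-\bfP[o\notin N]\,\bfP[N_0\cap A_0=\emptyset]=1-q\,(1-s),
\end{align*}
with $s:=\bfP[N_0\cap A_0\neq\emptyset]$, using independence. Under $\hat{\bfP}$, the events $\{o\in N\}$ and $\{N\cap A_0\neq\emptyset\}$ are disjoint. Hence
\begin{align*}
\hat{\bfP}[N\cap A\neq\emptyset]=(1-q)+s.
\end{align*}
The difference of the two right-hand sides is $s-qs=s(1-q)\ge0$, which gives the claim.

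The main point is the construction itself. Making the self-infection and the neighbour infections mutually exclusive maximises the probability of hitting any test set containing $o$, while leaving every other quantity unchanged. The hypothesis $\bfP[N_0\neq\emptyset]\le q$ is exactly what allows all the neighbour mass to fit inside the recovery event. Once this is seen, the verification is the short computation above.
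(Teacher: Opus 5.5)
Your proposal is correct and is essentially the paper's own proof. It uses the same explicit measure $\hat{\bfP}$, namely mass $1-q$ on $\{o\}$, the $\bfP$-mass of $N\setminus\{o\}$ on each nonempty neighbour set, and the remainder $q-\bfP[\abs{N\setminus\{o\}}>0]$ on $\emptyset$. It then makes the same checks of the marginals, the mean and the hitting inequality. The only difference is cosmetic: you absorb the case $A=\{o\}$ into the case $o\in A$ (where $s=0$), whereas the paper treats it separately.
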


Before we come to the time-continuous setting, we note that domination results in the spirit of our Proposition~\ref{prop:exch} have also been derived in~\cites[Theorem 4.1\addsemicolon]{Kuulasmaa1982}{Kuulasmaa_Zachary_1984} for a class of SIR models.
Percolation-based bounding techniques for spatial and network epidemics more broadly have also been developed in~\cite{meester2011bounding,miller2008bounding}, though without the local-to-global comparison structure used here.
\subsection{Local-to-global comparison in continuous time}
For the continuous-time setting we fix a collection $R=(R_x)_{x\in \Z^d}$ of i.i.d.~Poisson point processes with rate one representing the times at which the individual at vertex $x\in \Z^d$ recovers. Additionally, fix $\lambda \geq 0$ and let $I=(I_x)_{x \in \Z^d}$ be another i.i.d.~family of Poisson point processes with rate $2d\lambda$, independent from  $R$ that represents the times at which the vertex $x$ can possibly infect some of its nearest neighbours. 
Now, at every space-time position $(x,t)$, with $t\in I_x$, independently of everything else, a random set of neighbours $N(x,t)=x+N'(x,t)$ gets infected. Here, the $N'(x,t)$ are i.i.d.~copies of $N\subseteq\mathcal N_o$. 

In view of the discrete-time setting, for a realisation of $(I,R)$, consider the set of finite nearest-neighbour space-time paths of length $n\ge 0$,
\begin{align*}
\Psi_n=\Psi_n(I,R):=\big\{\bar x=\big((x_0,t_0),&\dots,(x_{n-1},t_{n-1}), x_n\big)\colon x_0=o\text{ and for all }0\le i< n\\
&x_{i+1}- x_i\in \mathcal N_o,\, t_i\in I_{x_i}\text{ with }t_0\le \dots \le t_{n-1}\text{ and}\\
&R_{x_0}\cap [0,t_0]=\emptyset,\, R_{x_i}\cap [t_{i-1},t_{i}]=\emptyset \text{ for all }0< i< n
\big\}
\end{align*}
that start in the origin and are potential infection paths towards a final vertex $x_n$. Note that $\Psi_n$ is almost-surely finite. Write $\Psi:=\bigcup_{n\ge 0}\Psi_n$ and denote for a measurable $\Xi=\Xi(I,R)\subset \Psi(I,R)$ by
$\mathcal B^\Xi$
the indicator of the event that there exists a path $\big((x_0,t_0),\dots,(x_{n-1},t_{n-1}), x_n\big)\in \Xi$ such that
$$
x_{i+1}\in N(x_i,t_i)\qquad\text{ for all }0\le i<n.
$$
In words, there is an infection path from $(o,0)$ inside $\Xi$, where $\Xi$ may additionally depend on the realisation of $(I, R)$, that completely avoids the recovery times, see Figure~\ref{fig:continuous} for an illustration. 

Let us highlight that, compared to the discrete-time setting, infections are only sent to the nearest neighbours, whereas the infected vertex itself remains infected until it recovers via its independent recovery process. Additionally, the infections are transmitted at the infection times and not ``diagonally" one time unit further as in the discrete-time setting. 
\begin{figure}[t]
\centering
\begin{tikzpicture}[>=Stealth, x=1.7cm, y=0.95cm,
  arr/.style={->,thick},
  rec/.style={draw,cross out,thick,minimum size=6pt,inner sep=0pt},
  ev/.style={circle,fill=black,inner sep=1.4pt},
  xione/.style={line width=6pt,green!60!black,opacity=.30,line cap=round,line join=round},
  xitwo/.style={line width=6pt,red!70!black,opacity=.30,line cap=round,line join=round}]
\foreach \x in {-2,-1,1,2} \draw[gray!60] (\x,0)--(\x,6.2);
\draw[->] (0,0) -- (0,6.6) node[above] {time $t$};
\draw[->] (-2.6,0) -- (2.75,0) node[right] {$\mathbb Z^d$};
\foreach \x in {-2,...,2} \node[below] at (\x,0) {$\x$};
\foreach \x/\a/\b in {0/0/3.4, 1/0.8/1.8, 1/2.6/4.5, 2/1.2/2.3, 2/3.1/5.2, -1/2.6/3.3, -2/2.9/3.3}
  \draw[line width=2.4pt] (\x,\a)--(\x,\b);
\draw[xione] (0,0)--(0,0.8)--(1,0.8)--(1,1.2)--(2,1.2)--(2,1.5);
\draw[xitwo] (0,0)--(0,2.6)--(-1,2.6)--(-1,2.9)--(-2,2.9)--(-2,4.0);
\draw[green!50!black,thick] (1.9,1.5)--(2.1,1.5) node[right] {$\bar x^{(1)}$};
\draw[red!70!black,thick]   (-2.1,4.0)--(-1.9,4.0) node[right] {$\bar x^{(2)}$};
\foreach \x/\t in {0/3.4, 0/5.8, 1/1.8, 1/4.5, 2/2.3, 2/5.2, -1/3.3, -1/4.9, -2/1.0, -2/3.3}
  \node[rec] at (\x,\t) {};
\foreach \x/\t/\y in {%
  0/0.8/1, 0/2.6/-1, 0/2.6/1,
  1/1.2/2, 1/3.1/0, 1/3.1/2,
  2/0.3/1, 2/4.0/1,
  -1/0.5/-2, -1/2.9/-2,
  -2/2.0/-1, -2/4.6/-1}
  {\node[ev] at (\x,\t) {}; \draw[arr] (\x,\t) -- (\y,\t);}
\end{tikzpicture}
\caption{A graphical realisation of a continuous-time infection process with a variable number of outgoing infection arrows, alongside a set of paths $\Xi(I,R) = \{\bar{x}^{(1)}, \bar{x}^{(2)}\}$. The infection only passes through $\bar x^{(1)}$. }
\label{fig:continuous}
\end{figure}

Our second main result now provides a criterion that allows to compare the probabilities of events $\mathcal B^\Xi$ based on a local comparison of the distributions of $N$. Again, we consider distributions $\bfP$ and $\bfQ$ of $N$, and denote by $\bbP$ and $\bbQ$ the distribution of the infinite space-time system based on the distribution of $(I,R)$ and the i.i.d.~copies $N'(x,t)$ based on $\bfP$ and $\bfQ$, respectively.

\begin{theorem}[Local-to-global in continuous time]\label{thm_cont}
If $\bfP$, $\bfQ$ are such that 
\begin{align*}
    \bfP[N\cap A\neq \emptyset]\le \bfQ[N\cap A\neq \emptyset],\qquad \text{ for all }\emptyset\subsetneq A\subseteq\mathcal N_o,
\end{align*}
then,
\begin{align*}
    \mathbb{P}[\calB^{\Xi}] \leq  \mathbb{Q}[\calB^{\Xi}], \qquad \text{ for all measurable }\Xi.
\end{align*}
\end{theorem}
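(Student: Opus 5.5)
We reduce Theorem~\ref{thm_cont} to a conditional, essentially discrete statement by conditioning on the Poisson clocks $(I,R)$. Given a realisation of $(I,R)$, the random sets $N(x,t)$, $t\in I_x$, are independent copies of $N$ under $\bfP$ or $\bfQ$, and $\Xi=\Xi(I,R)$ is a fixed deterministic set of space-time paths. It is therefore enough to prove the conditional inequality
\begin{align*}
\mathbb P[\calB^\Xi\mid I,R]\le \mathbb Q[\calB^\Xi\mid I,R]\qquad\text{almost surely}.
\end{align*}
Integrating over the common law of $(I,R)$ then gives the claim.

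We first restrict to finite path sets. Set $\Xi_{T,n}:=\{\bar x\in\Xi\colon |\bar x|\le n,\ t_{|\bar x|-1}\le T\}$. Almost surely only finitely many points of $I$ in a bounded region are reachable by paths of bounded length, so $\Xi_{T,n}$ is finite. The events $\calB^{\Xi_{T,n}}$ increase to $\calB^\Xi$ as $T,n\to\infty$, and by continuity of measures it suffices to treat finite $\Xi$.

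For finite $\Xi$, we list the finitely many infection marks $(x,t)$ that appear in paths of $\Xi$ as $e_1,\dots,e_m$. With $N_j$ the random set attached to $e_j$, the event $\calB^\Xi$ is an increasing union over paths $\bar x$ of the intersections $\bigcap_i\{x_{i+1}\in N(x_i,t_i)\}$. So $\calB^\Xi=\{(N_1,\dots,N_m)\in \mathcal U\}$ for an up-set $\mathcal U$ in $\calP(\mathcal N_o)^m$. The remaining task is the following lemma: if $\bfP,\bfQ$ satisfy the hitting-set condition, then $\bfP^{\otimes m}[\mathcal U]\le \bfQ^{\otimes m}[\mathcal U]$ for every such up-set.

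We prove the lemma by swapping one coordinate at a time, replacing $N_j$ under $\bfP$ by $N_j$ under $\bfQ$ while the others are held fixed. It then suffices that for every up-set $\mathcal V\subseteq\calP(\mathcal N_o)$ we have $\bfP[N\in\mathcal V]\le\bfQ[N\in\mathcal V]$. This does \emph{not} follow from the hitting condition alone: the hitting condition is weaker than stochastic domination. This is exactly the step where the path structure has to be used. In each path the mark $e_j$ contributes only the requirement $x_{i+1}\in N_j$ for a single target, so $\mathcal U$ has the form ``some clause holds'', where a clause demands single elements from each $N_j$. Fixing all coordinates except $N_j$, the section of $\mathcal U$ is therefore $\{N_j\cap A\neq\emptyset\}$ for some $A\subseteq\mathcal N_o$. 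The set $A$ consists of the targets $y$ for which some path using $e_j\to y$ is completed by the other coordinates, or $A$ is the whole of $\calP(\mathcal N_o)$ if the event already holds without $e_j$. The hypothesis covers exactly these sections.

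The main obstacle is the repeat problem. A single path could visit the same mark $(x,t)$ twice and use two different targets. Because $t_0\le\dots\le t_{n-1}$ and the $I_{x}$ are a.s.\ simple with no common times, this would force an immediate back-and-forth $x\to y\to x$ at the same time. I would rule this out by noting that almost surely $t\notin I_y$, so consecutive marks in a path lie at distinct sites with $t_i<t_{i+1}$ strictly, except at probability zero. Hence each path uses each mark at most once, and the sections are genuinely of hitting type. With this in hand, the coordinatewise swaps compose: conditioning on the other coordinates and applying the hypothesis to the section at step $j$, then integrating, yields the lemma after $m$ steps. The argument is the continuous-time analogue of Theorem~\ref{thm_disc}, or equivalently McDiarmid's clutter argument applied conditionally on $(I,R)$.
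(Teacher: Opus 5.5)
Your proposal is correct and follows essentially the same route as the paper: you condition on $(I,R)$, truncate to a finite set of relevant infection marks, replace the local law $\bfP$ by $\bfQ$ one mark at a time using that each one-coordinate section of $\calB^\Xi$ is of hitting type $\{N\cap A\neq\emptyset\}$, and pass to the limit by monotonicity. Your explicit check that, almost surely, consecutive times along a path are strictly increasing, so that no path uses the same mark twice, supplies a detail the paper leaves implicit when it says the discrete-time argument carries over.
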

Again, we highlight that the above result can be used to compare processes for which no monotone coupling is available. For $t\ge 0$ and the measurable set
\begin{align*}
\Xi(I,R)=\big\{&\big((x_0,t_0),\dots,(x_{n-1},t_{n-1}), x_n\big)\in \Psi\colon t_{n-1}\le t,\,  R_{x_n}\cap [t_{n-1},t] = \emptyset
\big\},
\end{align*}
we have that $\mathcal B^\Xi$ is the indicator of the event that the infection survives up until time $t\ge0$, and letting $t$ go to infinity, our theorem provides a criterion for the ordering of global survival. Similarly, for $t\ge 0$ and the measurable set
\begin{align*}
\Xi(I,R)=\big\{&\big((x_0,t_0),\dots, (x_{n-1},t_{n-1}),x_n\big)\in \Psi\colon x_n = x, t_{n-1}\leq t
\big\},
\end{align*}
i.e., the set of paths ending in $x\in \Z^d$ before time $t\ge 0$, the indicator $\mathcal B^\Xi={\bf 1}\{\tau_x\le t\}$, with $\tau_x$ the first hitting time of $x\in \Z^d$, can be used to order first-hitting probabilities. In particular, in case the existence of limiting shapes for rescaled infected sets of the comparable models can be guaranteed, our results can be used to determine a nesting of those sets. 

Next, we provide comparisons for a variety of infection distributions that are appropriately calibrated. 
\subsection{Applications in continuous time}
In the discrete-time setting we mostly made comparisons between measures on $\calP(\NN)$ for which the expected number of infected neighbours at each space-time point was the same. In the continuous-time setting we compare measures on $\calP(\calN_o)$ for which the expected number of outgoing infections in a unit time interval is calibrated to match $2d\lambda$, where we recall that $2d\lambda$ is the expected number of outgoing infections in the classical contact process based on $I$.  Our comparisons will always use the same underlying infection times $I$ and recovery times $R$ but different infection distributions at the infection times.

A few of the infection models can be directly taken from the discrete-time setting. In particular, any exchangeable measure over $\calP(\calN_o)$ with mean one is a valid candidate for a distribution of $N$. Some examples include the all-or-nothing measure $\bfP^{\text{aon}}$, where
\begin{align*}
    \bfP^{\text{aon}}[\abs{N} = 2d] =1/(2d)\quad\text{ and }\quad \bfP^{\text{aon}}[\abs{N} = 0] = 1 -1/(2d).
\end{align*}
 We might also consider the Bernoulli measure $\hat{\bfP},$ where $x\in N$ independently for all $x\in \calN_o$ and with probability $1/(2d)$. 

Specifically in the continuous-time case we define the following infection model. Consider a parameter $a\in [1,2d]$, to which we associate two other parameters 
\begin{align*}
    k:=\lfloor a\rfloor\in\{1,\ldots, 2d\}\quad\text{ and }\quad \varepsilon:= a - k\in [0,1).
\end{align*}
We define the measure $\hat{\bfQ}_a$ on $\mathcal P(\calN_o)$ such that $\hat{\bfQ}_a[\abs{N} = k] = (1-\varepsilon)/k, \hat{\bfQ}_a[\abs{N} = k+1] = \varepsilon/(k+1)$, and $\hat{\bfQ}_a[|N|\notin\{0,k,k+1\}] = 0$, i.e., the remaining mass concentrates on the empty set. Conditionally on $\abs{N}$ being $k$ or $k+1$, $N$ is a uniform subset of $\calN_o$ of size $k$ or $k+1$, respectively. We call $\hat{\bfQ}_a$ the {\em budget-$a$ measure}, and let $\hat{\bbQ}_a$ denote the corresponding distribution of the associated {\em budget-$a$ space-time process}. We note that, by definition we have the calibration $\hat{\bfQ}_a[\abs{N}] = 1$ for $a\in[1,2d]$. We now present the following result that gives an ordering of this family of measures.

\begin{proposition}[Ordering of budget-$a$ measures]\label{prop:cts_ord1}
    For $a,b\in[1,2d]$, $a<b$, let $\hat{\bfQ}_a, \hat{\bfQ}_b$ be defined as above. Then,
    \begin{align*}
        \hat{\bfQ}_b[N\cap A\neq\emptyset]\le \hat{\bfQ}_a[N\cap A\neq\emptyset], \qquad\text{ for all }\emptyset\subsetneq A\subseteq \calN_o.
    \end{align*}
\end{proposition}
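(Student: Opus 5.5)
The plan is a direct computation. We fix $\emptyset\subsetneq A\subseteq\calN_o$ with $m:=\abs{A}\ge 1$ and write $n:=2d$. We consider $f(a):=\hat{\bfQ}_a[N\cap A\neq\emptyset]$ as a function of $a\in[1,2d]$ and show that it is nonincreasing.

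For a uniform $k$-subset of $\calN_o$, the probability of hitting $A$ is
\begin{align*}
h(k):=1-\binom{n-m}{k}\binom{n}{k}^{-1},
\end{align*}
with $h(0)=0$. Since the empty set never hits $A$, for $a=k+\varepsilon$ the definition of $\hat{\bfQ}_a$ gives
\begin{align*}
f(a)=(1-\varepsilon)\,g(k)+\varepsilon\, g(k+1),\qquad g(j):=\frac{h(j)}{j}.
\end{align*}
Hence $f$ is exactly the piecewise-linear interpolation of the sequence $(g(j))_{j=1}^{n}$ at the integer points. This is continuous across integers, because $\varepsilon\to 1$ gives $g(k+1)$. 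It therefore suffices to show that $g$ is nonincreasing on $\{1,\dots,n\}$.

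To show this, I would prove that $h$ is concave on $\{0,\dots,n\}$. Concavity together with $h(0)=0$ gives that $h(j)/j$ is nonincreasing, since the average slope from $0$ to $j$ decreases. For concavity, realise the uniform $k$-subset as the first $k$ entries of a uniform random ordering of $\calN_o$. Then
\begin{align*}
h(k+1)-h(k)=\mathbb P\big[\text{the first element of }A\text{ in the ordering is at position }k+1\big].
\end{align*}
This probability is proportional to $\binom{n-k-1}{m-1}$, the number of ways to place the remaining $m-1$ elements of $A$ after position $k+1$. That is nonincreasing in $k$, so the increments of $h$ decrease and $h$ is concave.

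The only step needing care is this concavity argument, i.e.\ identifying the increments of $h$ as the law of the minimal position of $A$ in a random ordering. The interpolation identity and the passage from concavity to monotonicity of $g$ are routine. Combining the steps, $a\mapsto f(a)$ is nonincreasing, which gives $\hat{\bfQ}_b[N\cap A\neq\emptyset]\le\hat{\bfQ}_a[N\cap A\neq\emptyset]$ for $a<b$.
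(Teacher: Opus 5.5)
Your proof is correct. Its skeleton matches the paper's: reduce to integer sizes, deduce that $h(k)/k$ is non-increasing from concavity of $h$ and $h(0)=0$, and finish by linear interpolation in $\varepsilon$.

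The real difference is in how concavity is obtained. The paper extends $r(k)=\binom{2d-\ell}{k}/\binom{2d}{k}$ to real $k$ via the product $\prod_{n=0}^{\ell-1}(1-k/(2d-n))$ and checks $r''\ge 0$ through the logarithmic derivative. That computation needs all factors to be positive, so it is only carried out on $[0,2d-\ell]$. The regime $\abs{A}\ge 2d-j+1$, where $\hat{\bfQ}_j$ hits $A$ with probability exactly $1/j$, then has to be handled separately, by a reduction to $i=j-1$ and the bound $\binom{2d}{j-1}\ge j$.

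Your random-ordering argument gives the increments exactly, $h(k+1)-h(k)=\binom{n-k-1}{m-1}/\binom{n}{m}$, for every $k\in\{0,\dots,n-1\}$. This includes the range where $h\equiv 1$ and the increments vanish. So discrete concavity holds on all of $\{0,\dots,n\}$ at once, and you need neither calculus nor a case distinction. The same formula also follows algebraically from $\binom{n-m}{k}/\binom{n}{k}=\binom{n-k}{m}/\binom{n}{m}$ and Pascal's rule.

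The paper's route proves convexity of a real-variable extension, which is more than the statement requires. Yours is shorter, uniform in $\abs{A}$, and avoids the rather terse ``without loss of generality'' reduction in the paper's second case.
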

Note that, with this notation, $\hat{\bfQ}_1$ is the local law of the classical contact process and, in view of the discussion following Proposition~\ref{prop:exch} above, it even dominates any other exchangeable measure $\hat{\bfQ}$ for which $\hat{\bfQ}[\abs{N}] = 1$. 

Let us highlight that the classical local infection law is dominant for the continuous-time contact process compared to other continuous-time relatives. This is contrasted by the lack of domination of its most natural discrete-time version, Bernoulli oriented percolation, with respect to other discrete-time processes, see Section~\ref{sec_application}.  

Let us finally rephrase our results with respect to the ordering of the corresponding critical thresholds. For this, let $\lambda_c$ be the critical global-survival threshold in the classical contact process, i.e., where $|N|=1$ almost surely, and
$\lambda^\bfP_c$ the critical parameter for global survival for any $\bfP$ on $\calP(\calN_o)$.
\begin{corollary}[Survival and extinction regimes for exchangeable measures]\label{cts_crit}
For any exchangeable $\bfP$ with $\bfP[\abs{N}] = 1$ and $\bfP[\abs{N}> 0]>0$, we have that
    \begin{align*}
        \lambda_c \leq \lambda^\bfP_c\leq \lambda_c/\bfP[\abs{N}>0].
    \end{align*}
\end{corollary}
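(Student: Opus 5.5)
The plan is to derive both inequalities from Theorem~\ref{thm_cont} applied to survival events, combined with a scaling argument: changing the local law while adjusting the infection rate $\lambda$. Throughout, $\lambda^\bfP_c$ is taken relative to the rate $2d\lambda$ of the infection clock $I$.

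\emph{Lower bound.} Let $\bfP$ be exchangeable with $\bfP[\abs{N}]=1$. By the discussion after Proposition~\ref{prop:cts_ord1} (the continuous-time analogue of Proposition~\ref{prop:exch} on $\calP(\calN_o)$ with mean one, whose upper extremal element is $\hat\bfQ_1$), we have
\[
\bfP[N\cap A\neq\emptyset]\le \hat\bfQ_1[N\cap A\neq\emptyset]\quad\text{for all }\emptyset\subsetneq A\subseteq\calN_o.
\]
To spell this out, the upper bound of Proposition~\ref{prop:exch} adapted to mean $1$ gives domination by the law that puts $k=1$ uniform neighbour with probability $1$. This is $\hat\bfQ_1$, the classical contact process law. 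Theorem~\ref{thm_cont}, applied with the survival-up-to-time-$t$ set $\Xi$ and then letting $t\to\infty$, yields $\bbP[\text{survival}]\le \bbQ_1[\text{survival}]$ at every $\lambda$. If $\lambda<\lambda_c$, the right side is zero, so the left side is zero too. Hence $\lambda^\bfP_c\ge\lambda_c$.

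\emph{Upper bound.} Set $\rho=\bfP[\abs{N}>0]$. Since infection events with $N=\emptyset$ have no effect, thinning shows that the $\bfP$-process at rate $\lambda$ has the same law (for the infection dynamics) as the process with local law $\bfP':=\bfP[\cdot\mid \abs{N}>0]$ at rate $\rho\lambda$: independently thinning a Poisson process of rate $2d\lambda$ with retention probability $\rho$ gives a Poisson process of rate $2d\rho\lambda$, and the marks on retained points are i.i.d.\ with law $\bfP'$. The law $\bfP'$ has no atom at the empty set. Any nonempty $N$ hits $A$ with probability at least that of a single uniform neighbour, by exchangeability: for $\abs{N}=j\ge1$ uniform,
\[
P(N\cap A\neq\emptyset)=1-\binom{2d-\abs{A}}{j}\Big/\binom{2d}{j}\ \ge\ \abs{A}/(2d).
\]
This follows since $1-\binom{2d-\abs{A}}{j}/\binom{2d}{j}$ is nondecreasing in $j$, the probability of avoiding $A$ being a product of factors $\frac{2d-\abs{A}-i}{2d-i}\le1$. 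Hence $\hat\bfQ_1[N\cap A\neq\emptyset]\le \bfP'[N\cap A\neq\emptyset]$. By Theorem~\ref{thm_cont} at rate $\rho\lambda$, the $\bfP'$-process survives with at least the probability of the classical contact process at rate $\rho\lambda$. This probability is positive whenever $\rho\lambda>\lambda_c$, so $\lambda_c^\bfP\le\lambda_c/\rho$.

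The main obstacle is making the thinning identification rigorous within the framework of Theorem~\ref{thm_cont}, whose $(I,R)$ has a fixed rate $2d\lambda$. I would handle it by noting that the survival event depends only on the marked point process of nonempty infection events together with $R$. This marked process is a Poisson process with rate $2d\rho\lambda$ and i.i.d.\ marks of law $\bfP'$, by the marking/thinning theorem, so the survival probabilities coincide. A secondary point is the monotonicity in $j$ above, which is elementary.
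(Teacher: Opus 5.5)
Your proposal is correct and follows essentially the paper's route: the lower bound via domination of every exchangeable mean-one law by $\hat{\bfQ}_1$ combined with Theorem~\ref{thm_cont}, and the upper bound via thinning out the infection events with $N=\emptyset$, which leaves a rate-$2d\bfP[\abs{N}>0]\lambda$ process whose every event hits at least a uniform neighbour. The differences are cosmetic: you justify the thinning explicitly through the marking theorem, and you close the upper bound by checking $\bfP[\,\cdot\mid\abs{N}>0]$ against $\hat{\bfQ}_1$ in Theorem~\ref{thm_cont}, whereas the paper simply notes that each retained $N(x,t)$ contains a uniformly chosen neighbour.
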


In the next section, we provide all proofs, starting with the discrete-time setting.

\section{Proofs}\label{sec_proof}
\subsection{Discrete time}
We start with our first main result on a local-to-global comparison, which is a version of the corresponding result for percolation in~\cite{baumler2026localcriteriaglobalconnectivity} adapted to the oriented setting.
\begin{proof}[Proof of Theorem~\ref{thm_disc}]

Assume that $\Xi$ is finite. Then, the set of all space-time points reached by some path in $\Xi$, i.e., 
\begin{align*}
    \calC=\calC(\Xi):=\bigcup_{\bar x\in \Xi}\bigcup_{0\le i\le |\bar x|-1}\big\{(x_i,i)\in\bar{x}\big\},
\end{align*}
is also finite. Now, for a subset $U\subseteq \calC$, we define the measure $\bbQ_U$ by independently sampling $N'(x,i)\sim \bfQ$ for all $(x,i)\in U$ and $N'(y,j)\sim \bfP$ for all $(y,j)\in \calC\setminus U$. Let $U\subseteq \calC$ and $(x,m)\in \calC$. Since $\bbP = \bbQ_\emptyset$ and $\bbQ = \bbQ_{\calC}$, it then suffices to show that 
\begin{align*}
    \bbQ_U[\calB^{\Xi}]\leq \bbQ_{U\cup \{(x,m)\}}[\calB^{\Xi}].
\end{align*}
where, without loss of generality we can assume that $(x,m)\notin U$. For this purpose we will reproduce the proof of \cite[Theorem 2.1]{baumler2026localcriteriaglobalconnectivity}. Define the sigma algebra $\mathcal F^c_{(x,m)}$ as the smallest sigma algebra such that the random field $(N(y,i))_{(y,i)\neq (x,m)}$ is measurable. Conditionally on $\mathcal{F}^c_{(x,m)}$, there are two cases:
\begin{enumerate}
\item $(x,m)$ is {\em not pivotal} for $\calB^{\Xi}$. That is, if $N(x,m) = \emptyset$, then $\calB^{\Xi}$ still occurs, or, even if $N(x,m) =x+ \NN$, then $\calB^{\Xi}$ does not occur.
\item $(x,m)$ is {\em pivotal} for $\calB^{\Xi}$. That is, 
    if $N(x,m) = \emptyset$, then $\calB^{\Xi}$ does not occur, however, 
    there is some $a\in x + \NN$ such that, if $a\in N(x,m)$, then $\calB^{\Xi}$ occurs. 
\end{enumerate}
In the first case, there is nothing we have to do, because then the event $\calB^{\Xi}$ does not depend on $N(x,m)$.
In the second case, we observe that, conditionally on $\mathcal{F}^c_{(x,m)}$, there is some subset $\emptyset\subsetneq A\subseteq \NN$ such that $\calB^{\Xi}$ occurs if there is an open edge between $(x,m)$ and $(x+a,m+1)$ for $a\in A$. In this case, using our assumption on the ordering of the local laws,
\begin{align*}
    \bbQ_U[\calB^{\Xi}\cond\mathcal F^c_{(x,m)}] &= \bbQ_U[N'(x,m)\cap A\neq \emptyset\cond\mathcal F^c_{(x,m)}] = \bfP[N\cap A\neq \emptyset]\\&\leq \bfQ[N\cap A\neq \emptyset] = \bbQ_{U\cup \{(x,m)\}}[N'(x,m)\cap A\neq \emptyset\cond\mathcal F^c_{(x,m)}].
\end{align*}
The claim follows by taking the expectations. 

Finally, note that $\Xi$ is at most countable, so we can write $\Xi = \{\bar x^{(1)},\bar x^{(2)},\ldots\}$ for some enumeration. In particular,  $\Xi_k\uparrow\Xi$ for finite $\Xi_k: = \{\bar x^{(1)},\ldots,\bar x^{(k)}\}$ and the result for general $\Xi$ follows from continuity of measures. 
\end{proof}

\begin{proof}[Proof of Proposition~\ref{prop:exch}]
The statement can be proved using the same arguments as in the proof of~\cite[Proposition~3.2]{baumler2026localcriteriaglobalconnectivity}. For the convenience of the reader we reproduce the lower bound here. The proof for the upper bound can be performed by equally closely mimicking the proof of ~\cite[Proposition~3.2]{baumler2026localcriteriaglobalconnectivity}. 

Let $\emptyset\subsetneq A\subseteq \NN$ and $x\in A$. Then, for any exchangeable measure $\bfP$ of $N$ with $\bfP[\abs{N}] = (2d+1)p$ we have
\begin{align*}
    \bfP[N\cap A\neq \emptyset]&\geq \bfP[x\in N ] = \sum_{n = 1}^{2d+1}\bfP\big[x\in N\cond \abs{N} = n\big]\bfP[\abs{N} = n]\\ &= \sum_{n = 1}^{2d+1}\frac{n}{2d+1}\bfP[\abs{N} = n] = \frac{1}{2d+1}\bfP[\abs{N}] = p  = \bfP_p^{\text{aon}}[N\cap A\neq \emptyset],
\end{align*}
as desired.
\end{proof}
\begin{proof}[Proof of Lemma~\ref{lemma:burst}]

For $A\subseteq \NN$, we define
\begin{align*}
\hat{\bfP}[N=A]= 
\begin{cases} 1-q &\text{ if }\{o\}=A,  \\ 
0 &\text{ if }\{o\}\neq A,\ o\in A, \\
\bfP[N\setminus\{o\}=A] &\text{ if }A\neq \emptyset,o\not\in A,\\
q - \bfP[\abs{N\setminus\{o\}}>0]&\text{ if }\emptyset = A.
\end{cases}
\end{align*} 
To begin, we wish to show that $\hat{\bfP}$ is a valid probability distribution. By our assumptions on $\bfP,$ $0\leq \hat{\bfP}[N = A]\leq 1$ for all $A\subseteq \NN$. Similarly,
\begin{align*}
   \hat{\bfP}[o\not\in N]&=q-\bfP[\abs{N\setminus\{o\}}>0]+\sum_{k = 1}^{2d}\bfP[\abs{N\setminus\{o\}} = k]=q, \text{ and }\\
   \sum_{A\subseteq \NN} \hat{\bfP}[N=A] &= \hat{\bfP}[o\in N] + \hat{\bfP}[N = \emptyset] +\sum_{k = 1}^{2d}\bfP[\abs{N\setminus\{o\}} = k] = 1.
\end{align*}
Additionally,
\begin{align*}
    \bfP[\abs{N}] = 1- q + \sum_{k = 1}^{2d}k\bfP[\abs{N\setminus\{o\}} = k] = 1-q + \sum_{k = 1}^{2d}k\hat{\bfP}[\abs{N\setminus\{o\}} = k] =  \hat{\bfP}[\abs{N}].
\end{align*}
Finally, we want to show that $\bfP[N\cap A \neq \emptyset]\leq \hat{\bfP}[N\cap A\neq \emptyset]$ for all $\emptyset\subsetneq A\subseteq \NN$. If $\{o\} = A$, then
\begin{align*}
    \bfP[N\cap A\neq\emptyset ] = \hat{\bfP}[N\cap A\neq \emptyset] = 1-q.
\end{align*}
If $o\notin A$, then
\begin{align*}
    \bfP[N\cap A\neq \emptyset] = \bfP[(N\setminus\{o\})\cap A\neq \emptyset] = \hat{\bfP}[N\cap A\neq \emptyset].
\end{align*}
 In the final case, where $o\in A$ and $\{o\}\neq A$, we have that
 \begin{align*}
    \bfP[N\cap A\neq \emptyset]&= \bfP[o\in N]+q\bfP[(N\setminus \{o\})\cap A\neq \emptyset]=1-q+q\bfP[(N\setminus \{o\})\cap A\neq \emptyset],\\
    \hat{\bfP}[N\cap A\neq \emptyset] &= \hat{\bfP}[o\in N]+\bfP[(N\setminus \{o\})\cap A\neq \emptyset]=1-q+\bfP[(N\setminus \{o\})\cap A\neq \emptyset],
\end{align*}
as desired.
\end{proof}

\subsection{Continuous time}
We continue with the proofs for the continuous-time setting.

\begin{proof}[Proof of Theorem~\ref{thm_cont}]
First, assume that $\Xi(I,R)$ is such that for almost all $(I,R)$ we have that $\sup_{\bar x\in\Xi(I,R)}\abs{\bar x}  \leq  n$, where $\abs{\bar x}$ stands for the number of steps in $\bar x$. Then, for almost all $(I,R)$ the set of relevant space-time points, i.e., 
\begin{align*}
   \calC =  \calC(\Xi(I,R),(I,R)) :=\bigcup_{\substack{\bar{x}\in \Xi:\, \abs{\bar{x}}\leq n }} &\{(x,t)\in \bar{x}\},
\end{align*}
is finite. Now we can use the same arguments as in the proof of Theorem~\ref{thm_disc} to interpolate between $\bbP$ and $\bbQ$ by first conditioning on $(I,R)$ and then step-by-step replacing the probabilities on the relevant space-time positions $(x,t)\in \calC$.
This gives
\begin{align*}
    \bbP[\calB^{\Xi}]\le \bbQ[\calB^{\Xi}].
\end{align*}
We conclude the proof by writing $\Xi_n$ for the subset of $\Xi$ consisting of paths of lengths less than $n+1$, we have that $\calB^{\Xi_n}\uparrow\calB^{\Xi}$ for almost all $(I,R)$ and thus the result follows by an application of the monotone-convergence theorem with respect to $n\uparrow\infty$.
\end{proof}
\begin{proof}[Proof of Proposition~\ref{prop:cts_ord1}]
    We first restrict our attention to the case where $a = i, b =j,$ with $i,j$ positive integers. 
    We then have to show that
\begin{align*}
    \hat{\bfQ}_{j}[N\cap A\neq \emptyset]\leq \hat{\bfQ}_{i}[N\cap A\neq \emptyset],\qquad\text{for all }\emptyset \subsetneq A\subseteq \calN_o.
\end{align*}
For this, let $\ell = \abs{A}$ and first assume $\ell\leq 2d - j$. Then, we have to show that
\begin{align*}
    \frac{1}{i}\left(1 - \frac{\binom{2d - \ell}{i}}{\binom{2d}{i}}\right)\geq \frac{1}{j}\left(1 - \frac{\binom{2d - \ell}{j}}{\binom{2d}{j}}\right).
\end{align*}
Let $m:= 2d$ and for $k\in \{0, 1,\ldots, 2d-\ell\}$ write
\begin{align*}
    r(k) = \frac{\binom{m-\ell}{k}}{\binom{m}{k}} = \prod_{n = 0}^{\ell - 1}\frac{m - k -n}{m-n} = \prod_{n =0}^{\ell - 1}\left(1 - \frac{k}{m-n}\right).
\end{align*}
Using the product on the right-hand side, $r(k)$ can be defined for real $k\in [0,m-\ell ]$. We now want to show that $r(k)$ is convex on this interval. Write $h(k) = \ln(r(k)) = \sum_{n=0}^{\ell-1}\ln(m-k -n) + c_{\ell,m}$, where $c_{\ell,m}$ is a constant depending only on $\ell$ and $m$. Then,
\begin{align*}
    r''(k) = r(k)\left[\left(\sum_{n = 0}^{\ell -1}\frac{1}{m-k -n}\right)^2 - \sum_{n = 0}^{\ell - 1}\frac{1}{(m - k - n)^2}\right]\geq 0,
\end{align*} 
which implies that $r(k)$ is convex. To see the non-negativity, note that the second summand simply removes the diagonal term in the first summand.
We now observe that $g(k):= 1 -r(k)$ is concave, with $g(0) = 0$, so
\begin{align*}
    \hat{\bfQ}_k[N\cap A\neq \emptyset] = \frac{g(k) - g(0)}{k - 0},
\end{align*}
is the slope of the chord from the origin, which is non-increasing in $k$ for a concave function.

Next, we treat the case where $\ell\geq 2d - j + 1$. Based on the previous result, without loss of generality we can assume $\ell = 2d - j+1$ and $i = 2d - \ell = j-1$. All we need to show in this case is that $\binom{2d}{j-1}\geq j$, which can be deduced from the fact that
\begin{align*}
    \binom{2d}{j-1}\geq \binom{2d- 1}{j-1}\geq \ldots \geq \binom{j}{j - 1} = j.
\end{align*}
Finally, notice that for non-integer $a,b$ we may use that
\begin{align*}
    \varepsilon\mapsto \frac{(1-\varepsilon)}{k}g(k) + \frac{\varepsilon}{k+1}g(k+1)
\end{align*}
is non-increasing for $\varepsilon\in[0,1]$ since it interpolates the inequality $g(k)/k\ge g(k+1)/(k+1)$.
\end{proof}

\begin{proof}[Proof of Corollary~\ref{cts_crit}]
The first inequality follows by combining Theorem~\ref{thm_cont} and the comment after Proposition~\ref{prop:exch} for exchangeable measures on $\calP(\calN_o)$.

The second inequality can be deduced by considering a thinning of the family of i.i.d.~Poisson point processes $I$ determining the infection events. We obtain this new i.i.d.~family $I'=(I'_x)_{x\in\Z^d}$ by removing all space-time positions $(x,t)\in I_x$ for which $N(x,t) = \emptyset$. If the processes in $I$ have rate $2d\lambda$, then the processes in $I'$ have rate $2d\bfP[\abs{N}>0]\lambda$ and for each $(x,t)\in I'_x$, $N(x,t)$ contains at least one uniformly chosen neighbour of $x$. Taking $\lambda>\lambda_c/\bfP[\abs{N}>0]$ then concludes the proof.
\end{proof}

\subsection*{Acknowledgements.} We would like to thank Lukas Lüchtrath for helpful discussions. BJ and PS further acknowledge the financial support of the Leibniz Association within the Leibniz Junior Research Group on \emph{Probabilistic Methods for Dynamic Communication Networks} as part of the Leibniz Competition and by Deutsche Forschungsgemeinschaft (DFG, German Research Foundation) under Germany's Excellence Strategy -- The Berlin Mathematics Research Center MATH+ (EXC-2046/2, project ID: 390685689) through the project {\em EF-MA-Sys-2} on {\em Information Flow \& Emergent Behavior in Complex Networks}.

\section*{References}
\renewcommand*{\bibfont}{\footnotesize}
\printbibliography[heading = none]

@misc{baumler2026localcriteriaglobalconnectivity,
      title={Local criteria for global connectivity comparisons: beyond stochastic domination}, 
      author={Johannes Bäumler and Benedikt Jahnel and Jonas Köppl and Bas Lodewijks and Lily Reeves and András Tóbiás},
      year={2025},
      eprint={2510.03934},
      archivePrefix={arXiv},
      primaryClass={math.PR},
      url={https://arxiv.org/abs/2510.03934}, 
}

@article{williams2024reproduction,
  title={The reproduction number and its probability distribution for stochastic viral dynamics},
  author={Williams, Bevelynn and Carruthers, Jonathan and Gillard, Joseph J and Lythe, Grant and Perelson, Alan S and Ribeiro, Ruy M and Molina-Par{\'\i}s, Carmen and L{\'o}pez-Garc{\'\i}a, Mart{\'\i}n},
  journal={Journal of the Royal Society Interface},
  volume={21},
  number={210},
  pages={20230400},
  year={2024}
}

@article{Harris1974,
  author = {Theodore E. Harris},
  title = {Contact interactions on a lattice},
  journal = {Annals of Probability},
  year = {1974},
  volume = {2},
  number = {6},
  pages = {969--988}
}

@book{Liggett1985,
  author = {Thomas M. Liggett},
  title = {Interacting Particle Systems},
  publisher = {Springer},
  year = {1985}
}

@article{durrett1984oriented,
  title={Oriented percolation in two dimensions},
  author={Durrett, Richard},
  journal={The Annals of Probability},
  pages={999--1040},
  year={1984},
  publisher={JSTOR},
  volume = {12},
  number = {4}
}

@article{allard2023role,
  title={The role of directionality, heterogeneity, and correlations in epidemic risk and spread},
  author={Allard, Antoine and Moore, Cristopher and Scarpino, Samuel V and Althouse, Benjamin M and H{\'e}bert-Dufresne, Laurent},
  journal={SIAM Review},
  volume={65},
  number={2},
  pages={471--492},
  year={2023},
  publisher={SIAM}
}

@article{grimmett1998dependent,
  title={Dependent random graphs and spatial epidemics},
  author={van den Berg, J and Grimmett, Geoffrey R and Schinazi, Rinaldo B},
  journal={The Annals of Applied Probability},
  volume={8},
  number={2},
  pages={317--336},
  year={1998},
  publisher={Institute of Mathematical Statistics}
}

@article{schinazi2000horizontal,
  title={Horizontal versus vertical transmission of parasites in a stochastic spatial model},
  author={Schinazi, Rinaldo B},
  journal={Mathematical Biosciences},
  volume={168},
  number={1},
  pages={1--8},
  year={2000},
  publisher={Elsevier}
}

@article{miller2008bounding,
  title={Bounding the size and probability of epidemics on networks},
  author={Miller, Joel C},
  journal={Journal of Applied Probability},
  volume={45},
  number={2},
  pages={498--512},
  year={2008},
  publisher={Cambridge University Press}
}

@article{meester2011bounding,
  title={Bounding basic characteristics of spatial epidemics with a new percolation model},
  author={Meester, Ronald and Trapman, Pieter},
  journal={Advances in Applied Probability},
  volume={43},
  number={2},
  pages={335--347},
  year={2011},
  publisher={Cambridge University Press}
}

@article{cox1988limit,
  title={Limit theorems for the spread of epidemics and forest fires},
  author={Cox, J T  and Durrett, R.},
  journal={Stochastic Processes and Their Applications},
  volume={30},
  number={2},
  pages={171--191},
  year={1988},
  publisher={North-Holland}
}

@article{mcdiarmid1981general,
  title={General percolation and random graphs},
  author={McDiarmid, C.},
  journal={Advances in Applied Probability},
  volume={13},
  number={1},
  pages={40--60},
  year={1981},
  publisher={Cambridge University Press}
}

@book{Liggett1999,
  author = {Thomas M. Liggett},
  title = {Stochastic Interacting Systems: Contact, Voter and Exclusion Processes},
  publisher = {Springer},
  year = {1999}
}

@article{PearsonKrapivskyPerelson2011,
  author = {John E. Pearson and Pavel L. Krapivsky and Alan S. Perelson},
  title = {Stochastic theory of early viral infection: Continuous versus burst production of virions},
  journal = {PLoS Computational Biology},
  volume = {7},
  number = {2},
  year = {2011},
  pages = {e1001058}
}

@article{Kuulasmaa1982,
 ISSN = {00219002},
 URL = {http://www.jstor.org/stable/3213827},
 author = {Kari Kuulasmaa},
 journal = {Journal of Applied Probability},
 number = {4},
 pages = {745--758},
 publisher = {Applied Probability Trust},
 title = {The spatial general epidemic and locally dependent random graphs},
 urldate = {2026-08-17},
 volume = {19},
 year = {1982}
}

@article{Kuulasmaa_Zachary_1984,
 title={On spatial general epidemics and bond percolation processes},
 volume={21},
% DOI={10.2307/3213706},
 number={4},
 journal={Journal of Applied Probability},
 author={Kuulasmaa, Kari and Zachary, Stan},
 year={1984},
 pages={911–914}
 }
\end{document}